\long\def\symbolfootnote[#1]#2{\begingroup%
\def\thefootnote{\fnsymbol{footnote}}\footnote[#1]{#2}\endgroup}
\newcommand{\m}{\textup{M}}
\newcommand{\f}{\mathbb F}
\newcommand{\C}{\mathcal C}
\newcommand{\GL}{\textup{GL}}
\newcommand{\PGL}{\textup{PGL}}
\newcommand{\p}{\textbf{P}}
\newcommand{\SL}{\textup{SL}}
\def\imod#1{\allowbreak\mkern10mu({\operator@font mod}\,\,#1)}
\renewcommand*\env@matrix[1][*\c@MaxMatrixCols c]{%
  \hskip -\arraycolsep
  \let\@ifnextchar\new@ifnextchar
  \array{#1}}
\newtheorem{theorem}{Theorem}[section]
\newtheorem{lemma}[theorem]{Lemma}
\newtheorem{corollary}[theorem]{Corollary}
\newtheorem{proposition}[theorem]{Proposition}
\newtheorem*{theorem*}{Theorem}
\theoremstyle{definition}
\newtheorem{definition}[theorem]{Definition}
\newtheorem{remark}[theorem]{Remark}
\numberwithin{equation}{section}
\newcommand{\ignore}[1]{}
\newcommand{\mynote}[1]{}
\begin{document}
\setcounter{section}{0}
\title{Unit group of $\f_{p^k}\SL(3,2),p\geq 11$}
\author{Namrata Arvind, Saikat Panja}
\email{namchey@gmail.com, panjasaikat300@gmail.com}
\address{IISER Pune, Dr. Homi Bhabha Road, Pashan, Pune 411 008, India}
\thanks{$^*$The first named author is partially supported by the IISER Pune research fellowship where as the second author has been partially supported by supported by NBHM fellowship.}
\subjclass[2020]{Primary 16S34, Secondary 16U60, 20C05}
\keywords{Unit group; Group algebra; Finite field; Wedderburn decomposition}


\begin{abstract}
We provide the structure of the unit group of $\f_{p^k}(\SL(3,2))$, where $p\geq 11$ is a prime and $\SL(3,2)$ denotes the $3\times 3$ invertible matrices over $\f_2$.
\end{abstract}
\maketitle
\section{Introduction}
Let $q=p^k$ for some prime $p$ and $k\in\mathbb{N}$. Let $\f_q$ denote the finite field of cardinality $q$. For any group $G$, let $\f_qG$ denotes the group algebra of $G$ over $\f_q$. For basic notations and results on the subject of study, we refer the readers to the classic by Milies and Sehgal \cite{MS}. The group of units of $\f_qG$ has many applications. As an application of the unit groups of matrix rings, Hurley has proposed the constructions of convolutional codes (See \cite{H1},\cite{H2},\cite{HH1},\cite{HH2}). The structure of unit group
can also be used to deal with some problems in combinatorial number theory as well (See \cite{GGK}). This has encouraged a lot of researchers to find out the explicit structure of the group of units of $\f_qG$.
 
 A substantial amount of work has been done to find the structure of the algebra $\f_qG$, and also of the group of units of these algebras. For example in \cite{S}, the author has described units of $\f_qG$, where $G$ is a $p$-group. In a recent paper \cite{BLP} the authors have discussed the groups of units for the group algebras over abelian groups of order $17$ to $20$.
 Howerver the complexity of the problem increases with increase in the size of the group and the number of conjugacy classes it has. For more, one can check \cite{MSS2}, \cite{MA},\cite{TG} et cetera. 
 
 Very little is known for $\f_q G$, when $G$ is a non-Abelian simple group. For the case $G=A_5$, this has been discussed in \cite{MSS1}. The next group in the family of non-Abelian simple groups is the group $\SL(3,2)$. In Theorem \ref{s009} of this article we give a complete description of the unit group of $\f_q\SL(3,2)$ for $p\geq 11$.

Rest of the article is organized as follows. In section $2$, we give the known results which we will be using in subsequent sections. In section $3$ we discuss about some simple components of the Artin-Wedderburn decomposition of the group algebra. Next in  section $4$, we deduce the main result. We discuss some observations and conclude the paper by mentioning some remarks, in section $5$.


\section{Preliminaries}
First we fix some notations. We adopt already mentioned notations from section $1$. For an extension field $E/\f_q$, $\textup{Gal}(E/\f_q)$ denotes the Galois group of the extension. For $n\in\mathbb{N}$ the notation
$\m(n,R)$ denotes the full matrix ring of $n\times n$ matrices over $R$ where as $\GL(n,R)$ will denote the set of all invertible matrices in $\m(n,R)$. For a ring $R$, the set of units of $R$ will be denoted as $R^\times$. The center of a ring $R$ will be denoted as $Z(R)$. If $G$ is a group and $g\in G$, then $[g]$ will denote the conjugacy class of $g$ in $G$. For the group ring $\f_q G$, the group of units will be denoted as $\mathcal{U}(\f_q G)$. For the notations on projective spaces, we follow \cite{H}.

We say an element $g$ $\in$ $G$ is a $p'$-element if the order of $g$ is not divisible by $p$. 
Let $e$ be the exponent of the group $G$ and $\eta$ be a primitive $r$th root of unity, where $e=p^fr$ and $p\nmid r$. Let \begin{equation*}
    I_{\f_q} = \left\{l~(\text{mod }e):\text{  there exists } \sigma \in \textup{Gal}(\f_q(\eta)/\f_q)\text{ satisfying } \sigma(\eta) = \eta^l\right\}.
\end{equation*}
\begin{definition}
    For a $p'$-element $g \in G$, the cyclotomic $\f_q$-class of $g$, denoted by $S_{\f_q}(\gamma_g)$ is defined as $\left\{\gamma_{g^l} : l \in I_{\f_q} \right\}$ where $\gamma_{g^l}\in\f_q G$ is the sum of all conjugates of $g^l$ in $G$.  
\end{definition}
Then we have the following results which are crucial in determining the Artin-Wedderburn decomposition of $\f_qG$.
\begin{lemma}{\cite[Proposition 1.2]{F}}\label{n001}
The number of simple components of $\f_qG/J(\f_qG)$ is equal to the number of cyclotomic $\f_q$-classes in $G$.
\end{lemma}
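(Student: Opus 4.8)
The plan is to identify the number of simple components of $A=\f_qG/J(\f_qG)$ by extending scalars to an algebraic closure $\overline{\f}_q$ and then descending along the Galois action. First I would note that $A$ is a finite-dimensional semisimple $\f_q$-algebra, so by the Artin--Wedderburn theorem together with Wedderburn's little theorem (a finite division ring is a field) one may write $A\cong\prod_{i=1}^{s}\m(n_i,\f_{q^{d_i}})$, and the whole point is to identify $s$ with the number of cyclotomic $\f_q$-classes. Since $\f_q$ is perfect, $A$ is a separable $\f_q$-algebra, and I would use this to identify $\overline{\f}_q\otimes_{\f_q}A$ with $\overline{\f}_qG/J(\overline{\f}_qG)$: the radical $J(\f_qG)$ is nilpotent, so $\overline{\f}_q\otimes_{\f_q}J(\f_qG)$ is a nilpotent ideal of $\overline{\f}_qG$ and hence is contained in $J(\overline{\f}_qG)$, while the quotient $\overline{\f}_q\otimes_{\f_q}A$ is semisimple; comparing these two facts forces equality.

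Next I would pass to $\overline{\f}_q$ and count components. For each $i$ one has $\overline{\f}_q\otimes_{\f_q}\f_{q^{d_i}}\cong\overline{\f}_q^{d_i}$ by splitting the minimal polynomial of a generator, so $\overline{\f}_q\otimes_{\f_q}A$ has exactly $\sum_i d_i$ simple components; equivalently $\overline{\f}_qG$ has $\sum_i d_i$ isomorphism classes of simple modules, and by Brauer's theorem in modular representation theory this number equals the number of $p$-regular conjugacy classes of $G$. Now $\textup{Gal}(\overline{\f}_q/\f_q)$, topologically generated by the Frobenius $\phi\colon x\mapsto x^q$, acts on $\overline{\f}_q\otimes_{\f_q}A$ through the scalar factor and permutes its simple components; the $d_i$ components coming from a single $\m(n_i,\f_{q^{d_i}})$ form one $\phi$-orbit, since $\phi$ permutes the $d_i$ conjugate roots of the relevant minimal polynomial transitively. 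Hence $s$ equals the number of $\phi$-orbits on the isomorphism classes of simple $\overline{\f}_qG$-modules.

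It then remains to match these $\phi$-orbits with the cyclotomic $\f_q$-classes, and this is the step I expect to require the most care. The Frobenius twist sends a simple module $M$ with representation $\rho$ to the module $M^{(\phi)}$ with representation $\phi\circ\rho$ applied entrywise; the eigenvalues of $(\phi\circ\rho)(g)$ are the $q$-th powers of those of $\rho(g)$, so the Brauer character of $M^{(\phi)}$ at a $p$-regular element $g$ equals that of $M$ at $g^{q}$. Because the table of irreducible Brauer characters evaluated on the $p$-regular classes is a square invertible matrix, it intertwines the permutation of simple modules induced by the Frobenius twist with the permutation $[g]\mapsto[g^{q}]$ of the $p$-regular classes; conjugate permutation matrices have the same number of cycles, so the number of $\phi$-orbits on simple $\overline{\f}_qG$-modules equals the number of orbits of $[g]\mapsto[g^{q}]$ on $p$-regular classes of $G$.

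Finally I would unwind the definitions to close the loop. For a $p'$-element $g$ the power $g^{l}$ depends only on $l$ modulo the order of $g$, and that order divides $r$; since $\textup{Gal}(\f_q(\eta)/\f_q)$ is generated by $\eta\mapsto\eta^{q}$, the set $I_{\f_q}$ consists precisely of the residues modulo $e$ that are congruent modulo $r$ to a power of $q$. Hence $\{\gamma_{g^{l}}:l\in I_{\f_q}\}$ is exactly the orbit of $[g]$ under $[g]\mapsto[g^{q}]$ recorded through class sums, distinct cyclotomic $\f_q$-classes are disjoint, and they partition the class sums of $p'$-elements. Therefore the number of cyclotomic $\f_q$-classes equals the number of orbits of $[g]\mapsto[g^{q}]$ on $p$-regular classes, which by the previous paragraph is $s$, the number of simple components of $\f_qG/J(\f_qG)$.
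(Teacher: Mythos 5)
The paper does not prove this lemma at all: it is quoted verbatim from Ferraz (\cite[Proposition 1.2]{F}) and used as a black box, so there is no in-paper argument to compare against. Your proposal is a correct, essentially self-contained proof along the standard Berman--Witt/Galois-descent lines. All the main steps check out: the identification $\overline{\f}_q\otimes_{\f_q}J(\f_qG)=J(\overline{\f}_qG)$ via nilpotence of one inclusion and semisimplicity (separability over a perfect field) for the other; the count $\sum_i d_i$ of geometric components with the $d_i$ factors of $\overline{\f}_q\otimes_{\f_q}\m(n_i,\f_{q^{d_i}})$ forming a single transitive Frobenius orbit; the intertwining $\chi_{M^{(\phi)}}(g)=\chi_M(g^q)$, which together with the invertibility of the Brauer character table conjugates the twist permutation to $[g]\mapsto[g^q]$ and hence matches orbit counts; and the unwinding of $I_{\f_q}$ as the powers of $q$ modulo $r$, so that $S_{\f_q}(\gamma_g)$ is exactly the orbit of $[g]$ under $[g]\mapsto[g^q]$. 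For comparison, Ferraz's own argument works more directly with the center: the images of the class sums of $p$-regular elements form a basis of $Z(FG/J(FG))$, and one tracks the Galois action on primitive central idempotents rather than on Brauer characters; your route trades that for the (heavier but standard) facts that the number of simple $\overline{\f}_qG$-modules equals the number of $p$-regular classes and that the Brauer character table is invertible. Either packaging is fine; yours proves somewhat more along the way (it also recovers Lemma \ref{n002}, since the orbit of size $d_i$ corresponds to the component with centre $\f_{q^{d_i}}$).
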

\begin{lemma} {\cite[Theorem 1.3]{F}}\label{n002}
Let $n$ be the number of cyclotomic $\f_q$-classes in $G$. If $L_1,L_2,\cdots,L_n$ are the simple components of $Z(\f_qG/J(\f_qG))$ and $S_1,S_2,\cdots,S_n$ are the cyclotomic $\f_q$-classes of $G$, then with a suitable reordering of the indices,
\begin{equation*}
    |S_i| = [L_i:\f_q].
\end{equation*}
\end{lemma}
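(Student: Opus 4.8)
The plan is to deduce the statement by passing to a splitting field, using Galois descent to read off each degree $[L_i:\f_q]$ as the size of a Galois orbit, and then matching those orbits with cyclotomic $\f_q$-classes via the invertibility of the Brauer character table. Throughout set $m=[\f_q(\eta):\f_q]$, so that $\Gamma:=\textup{Gal}(\f_q(\eta)/\f_q)$ is cyclic of order $m$, generated by the Frobenius $\phi\colon x\mapsto x^{q}$, and $I_{\f_q}$ is the cyclic subgroup of $(\mathbb Z/r\mathbb Z)^{\times}$ generated by $q$. Since $r$ is the $p'$-part of the exponent of $G$ and $\eta$ is a primitive $r$-th root of unity, a theorem of Brauer ensures that $\f_q(\eta)$ is a splitting field for $G$ in characteristic $p$, so
\[
\f_q(\eta)G/J(\f_q(\eta)G)\ \cong\ \prod_{j=1}^{N}\m(d_j,\f_q(\eta)),
\]
where $N$ is the number of $p$-regular conjugacy classes of $G$, the factors being indexed by the irreducible Brauer characters $\varphi_1,\dots,\varphi_N$.

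First I would record the descent picture. Because $\f_q(\eta)/\f_q$ is separable one has $J(\f_q(\eta)G)=J(\f_qG)\otimes_{\f_q}\f_q(\eta)$, and forming the centre commutes with this base change, so writing $Z:=Z\bigl(\f_q(\eta)G/J(\f_q(\eta)G)\bigr)$ and $Z_0:=Z\bigl(\f_qG/J(\f_qG)\bigr)=\prod_{i=1}^{n}L_i$ one gets $Z\cong Z_0\otimes_{\f_q}\f_q(\eta)$, with $\Gamma$ acting on $Z$ fixing $G$ pointwise and $Z^{\Gamma}=Z_0$. Writing $L_i=\f_{q^{c_i}}$ and $c_i:=[L_i:\f_q]$, the isomorphism $L_i\otimes_{\f_q}\f_q(\eta)\cong\bigl(\f_{q^{\lcm(c_i,m)}}\bigr)^{\gcd(c_i,m)}$, together with the fact that this factor of $Z$ must be a product of copies of $\f_q(\eta)$, forces $c_i\mid m$; then $L_i\otimes_{\f_q}\f_q(\eta)\cong\bigl(\f_q(\eta)\bigr)^{c_i}$ and $\Gamma$ permutes its $c_i$ primitive idempotents in a single orbit. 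Since the primitive idempotents of $Z$ correspond to $\varphi_1,\dots,\varphi_N$ and $\Gamma$ acts on them by $\varphi\mapsto\varphi^{(q)}$ (the Brauer character of the $\phi$-twist of the corresponding simple module), the multiset of $\langle\phi\rangle$-orbit sizes on the irreducible Brauer characters is exactly $\{[L_1:\f_q],\dots,[L_n:\f_q]\}$.

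Next I would compare this with the action on $p$-regular classes. For a $p'$-element $g$ one has $\varphi^{(q)}(g)=\varphi(g^{q})$, since both sides equal the sum of the Brauer lifts of the $q$-th powers of the eigenvalues of $g$ in the simple module affording $\varphi$. Hence, letting $X=\bigl(\varphi_i(g_j)\bigr)_{i,j}$ be the Brauer character table, with $g_j$ a representative of the $p$-regular class $C_j$, the matrix $X$ is invertible and satisfies $X_{\varphi^{(q)},\,C}=X_{\varphi,\,C^{(q)}}$, where $C^{(q)}:=[g_C^{\,q}]$. Consequently the permutation matrix $P$ of $\varphi\mapsto\varphi^{(q)}$ acting on rows and the permutation matrix $Q$ of $C\mapsto C^{(q)}$ acting on columns satisfy $PX=XQ$, so $P$ and $Q$ are conjugate and have the same cycle type; thus the multiset of $\langle\phi\rangle$-orbit sizes on $p$-regular classes coincides with the one on Brauer characters. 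Finally, the $\langle\phi\rangle$-orbit of $C_j$ has size equal to the number of distinct class sums $\gamma_{g_j^{\,l}}$ with $l\in I_{\f_q}$, i.e.\ to $|S_{\f_q}(\gamma_{g_j})|$, so that multiset equals $\{|S_1|,\dots,|S_n|\}$. Combining the last two paragraphs gives $\{[L_1:\f_q],\dots,[L_n:\f_q]\}=\{|S_1|,\dots,|S_n|\}$ as multisets (both of size $n$, in accordance with Lemma~\ref{n001}), and a suitable reordering of indices yields $[L_i:\f_q]=|S_i|$.

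The inputs I would quote rather than prove are Brauer's splitting-field theorem in characteristic $p$, the compatibility of $J(\,\cdot\,)$ and of $Z(\,\cdot\,)$ with the base change $\f_q(\eta)/\f_q$, and the invertibility of the Brauer character table. The one genuinely load-bearing point is the intertwining identity $PX=XQ$ and the observation that it upgrades the equality of the \emph{number} of orbits (all that Lemma~\ref{n001} supplies) to an equality of the whole \emph{multiset} of orbit sizes; everything else is bookkeeping with finite fields and Galois theory. I expect the most delicate part of a careful write-up to be pinning down the Galois action on simple modules precisely enough to justify $\varphi^{(q)}(g)=\varphi(g^{q})$ together with the identity $X_{\varphi^{(q)},C}=X_{\varphi,C^{(q)}}$.
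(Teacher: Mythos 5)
The paper offers no proof of this lemma at all: it is imported verbatim as \cite[Theorem 1.3]{F}, so there is no internal argument to compare yours against. That said, your blind proof is correct and self-contained. The descent step is sound: for the separable extension $\f_q(\eta)/\f_q$ one indeed has $J(\f_q(\eta)G)=J(\f_qG)\otimes_{\f_q}\f_q(\eta)$ and $Z(-)$ commutes with base change, each field component $L_i$ contributes exactly one $\Gamma$-orbit of primitive idempotents of length $[L_i:\f_q]$ (a single orbit because otherwise the fixed ring $L_i$ would contain a nontrivial idempotent --- worth saying explicitly), and the identity $\varphi^{(q)}(g)=\varphi(g^q)$ holds because the Teichm\"uller lift is multiplicative and a $p'$-element acts semisimply. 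The genuinely clever point, as you note, is the intertwining $PX=XQ$ with $X$ the invertible Brauer character table: conjugate permutation matrices have equal traces of all powers, hence equal fixed-point counts of all powers, hence (by M\"obius inversion) the same cycle type, which is exactly what upgrades the orbit \emph{count} of Lemma \ref{n001} to the orbit-size \emph{multiset} needed here. Two harmless points you elide: the paper's $I_{\f_q}$ is defined modulo $e=p^fr$ while $\sigma(\eta)=\eta^l$ only pins $l$ down modulo $r$, but since a $p'$-element has order dividing $r$ this does not affect $S_{\f_q}(\gamma_g)$; and whether coefficientwise Frobenius sends $e_V$ to $e_{V^{(q)}}$ or $e_{V^{(q^{-1})}}$ is irrelevant for orbit sizes under a cyclic group. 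Ferraz's own published argument does the bookkeeping differently (tracking how components and cyclotomic classes split over the intermediate fields $\f_{q^d}$), but both routes reduce the statement to matching Frobenius-orbit data on the two sides, so yours is a legitimate alternative proof of a result the paper merely cites.
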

\begin{lemma}\cite[Lemma 2.5]{MS2}\label{n003}
Let $K$ be a field of charecteristic $p$ and let $A_1$, $A_2$ be two finite dimensional $K$-algebras. Assume $A_1$ to be semisimple. If $h$ : $A_2$ $\longrightarrow$ $A_1$ is a surjective homomorphism of $K$-algebras, then there exists a semisimple $K$-algebra $l$ such that $A_2/J(A_2)$ $=$ $l\oplus A_1$.  
\end{lemma}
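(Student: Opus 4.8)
The plan is to exhibit $A_1$ as a direct factor of the semisimple quotient $A_2/J(A_2)$, by first pushing the radical $J(A_2)$ into $\ker h$ and then splitting off the image using the Wedderburn--Artin structure of semisimple Artinian algebras. Note in advance that the characteristic-$p$ hypothesis plays no role here; what is really used is that $A_1$ and $A_2$ are finite dimensional over $K$, hence Artinian.

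First I would invoke the standard fact that a surjective homomorphism of rings carries the Jacobson radical into the Jacobson radical, so $h(J(A_2)) \subseteq J(A_1)$. Since $A_1$ is semisimple, $J(A_1) = 0$, and therefore $J(A_2) \subseteq \ker h$. Consequently $h$ factors through the canonical projection $A_2 \to A_2/J(A_2)$, yielding a surjective $K$-algebra homomorphism $\bar h : A_2/J(A_2) \twoheadrightarrow A_1$ with kernel $\ker h / J(A_2)$.

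Next I would use that $B := A_2/J(A_2)$ is semisimple Artinian (because $A_2$ is finite dimensional), hence a finite direct product of simple algebras by Wedderburn--Artin, in which every two-sided ideal is cut out by a central idempotent and is itself a subproduct. Applying this to the two-sided ideal $\ker \bar h$, there is a central idempotent $e \in B$ with $\ker \bar h = eB$, so that $B = eB \oplus (1-e)B$ as $K$-algebras. Put $l := eB$, which is again semisimple. The restriction of $\bar h$ to $(1-e)B$ is surjective onto $A_1$ with kernel $(1-e)B \cap eB = 0$, hence an isomorphism $(1-e)B \xrightarrow{\ \sim\ } A_1$; identifying $(1-e)B$ with $A_1$ along it gives $A_2/J(A_2) = l \oplus A_1$, as required.

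The argument is essentially formal, so I do not anticipate a genuine obstacle. The only point meriting care is the very first step --- that surjectivity of $h$ together with semisimplicity of the target forces $J(A_2) \subseteq \ker h$ --- since this is precisely what makes $\bar h$ well defined and is the sole place the hypothesis ``$A_1$ semisimple'' enters.
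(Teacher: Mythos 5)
The paper does not prove this lemma at all---it is quoted verbatim from \cite[Lemma 2.5]{MS2}---so there is no in-paper argument to compare against. Your proof is correct and is essentially the standard one: push $J(A_2)$ into $\ker h$ using surjectivity plus $J(A_1)=0$, pass to the semisimple quotient $B=A_2/J(A_2)$, and split off $\ker\bar h$ via a central idempotent from the Wedderburn--Artin decomposition, so that $l=eB$ and $(1-e)B\cong A_1$. The only cosmetic caveat is that if $\ker h=J(A_2)$ then $l$ is the zero algebra, which the statement implicitly tolerates; and you are right that the characteristic-$p$ hypothesis is never used, only finite dimensionality.
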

We will be using various descriptions of $\SL(3,2)$ in the sequel, which are well known. From \cite{CCNPW}, it is known that 
\begin{align*}
    \SL(3,2)=\GL(3,2)\cong \PGL(2,7)\cong\textup{PSL}(2,7).
\end{align*}
We have an embedding of $\SL(3,2)$ inside $S_8$ as follows:
\begin{align*}
    \SL(3,2)\cong\langle(3,7,5)(4,8,6),(1,2,6)(3,4,8)\rangle.
\end{align*}
This group has $7$ conjugacy classes and using \cite{GAP2021}, we have the following table:
\begin{align*}
    \begin{tabular}{|c|c|c|c|}
    \hline
    \text{Class}&\text{Representative}&\text{Order}&\text{No. of elements}\\\hline
       $\C_1$  & $\alpha_1=$ $(1)$& $1$ &$1$\\\hline
        $\C_2$ & $\alpha_2=$ $(1,2)(3 ,4)(5, 8)(6, 7)$& $2$ &$21$\\\hline
        $\C_3$ & $\alpha_3=$ $(3, 5, 7)(4, 6, 8)$ & $3$ &$56$\\\hline
        $\C_4$ & $\alpha_4=$ $(1, 2, 3, 5)(4, 8, 7, 6)$ & $4$ &$42$\\\hline
        $\C_5$ & $\alpha_5=$ $(2 ,3, 5, 4, 7, 8, 6)$ & $7$ &$24$\\\hline
        $\C_6$ & $\alpha_6=$ $(2 ,4, 6, 5, 8, 3, 7)$ & $7$ &$24$\\\hline
    \end{tabular}.
\end{align*}
We note down the following relations
\begin{equation}\label{ne010}
    [\alpha_5] = [\alpha_5^2] = [\alpha_5^4] .
\end{equation} 
and \begin{equation}\label{ne011}
    [\alpha_6] = [\alpha_5^3] = [\alpha_5^5] = [\alpha_5^6] = [\alpha_6].
\end{equation}
\section{On some simple components of $\f_qG$}
The next few lemmas are crucial for determining the different $n_i$'s occurring in the Artin-Wedderburn decomposition of $\f_q\SL(3,2)$.
\begin{lemma}\label{s001}
Let $G$ be a group of order $n$ and $\f$ be a field of characteristic $p>0$. Let $G$ acts on a finite set $X=\{1,2,\cdots,k\}$ doubly transitively. Set $G_{i}=\{g\in G:g\cdot i=i\}$ and $G_{i,j}=\{g\in G:g\cdot i=i,g\cdot j=j\}$. Then the $\f G$ module
\begin{align*}
    W=\left\{x\in\f^k:\displaystyle\sum\limits_{i=1}^kx_i=0,i\in X\right\}
\end{align*} 
is an irreducible $\f G$ module if $p\nmid k,p\nmid |G_{1,2}|$.
\end{lemma}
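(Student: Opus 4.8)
The plan is to show that $W$ is irreducible by computing the dimension of the endomorphism algebra $\mathrm{End}_{\f G}(W)$ (equivalently, by a direct argument that any nonzero submodule is everything). The natural permutation module $\f^k$ decomposes, at least as a set-theoretic direct sum, as $\f \cdot \mathbf{1} \oplus W$ where $\mathbf{1} = (1,1,\dots,1)$; the hypothesis $p \nmid k$ guarantees that $\mathbf{1} \notin W$, so this really is a direct sum decomposition of $\f G$-modules of dimensions $1$ and $k-1$. First I would recall the classical fact that, for a transitive action, $\dim_\f \mathrm{Hom}_{\f G}(\f^k, \f^k)$ equals the number of orbits of $G$ on $X \times X$ (the rank of the permutation action), provided we are in the semisimple situation; here the relevant Mackey-type count still gives $\dim \mathrm{End}_{\f G}(\f^k) \geq$ number of $G$-orbits on $X\times X$, with equality when the action is "multiplicity-free" in the modular sense. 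Double transitivity means $G$ has exactly two orbits on $X \times X$ (the diagonal and its complement), so the endomorphism algebra of $\f^k$ is at most $2$-dimensional.

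The key technical step is to make this count valid in characteristic $p$ under the stated hypotheses. The dimension of $\mathrm{Hom}_{\f G}(\f^k,\f^k)$ can be computed as $\dim_\f (\f^k)^G$-type invariants: by Frobenius reciprocity, since $\f^k = \mathrm{Ind}_{G_1}^G \f$, one has $\mathrm{End}_{\f G}(\f^k) \cong \mathrm{Hom}_{\f G_1}(\f, \mathrm{Res}_{G_1}\mathrm{Ind}_{G_1}^G \f)$, and by Mackey this restriction has a filtration/decomposition indexed by the double cosets $G_1 \backslash G / G_1$, of which there are two by double transitivity, with the "off-diagonal" piece being $\mathrm{Ind}_{G_{1,2}}^{G_1}\f$ (up to conjugation). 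The contribution of that piece to the Hom-space is $\dim (\mathrm{Ind}_{G_{1,2}}^{G_1}\f)^{G_1}$-related invariants, which is controlled by whether $p \mid |G_1 : G_{1,2}|$ or, more precisely, by the hypotheses $p \nmid k$ and $p \nmid |G_{1,2}|$: these ensure the relevant trace/averaging idempotents exist so that no extra endomorphisms appear and $\f^k$ is genuinely a multiplicity-free semisimple-like module with $\mathrm{End}_{\f G}(\f^k)$ exactly $2$-dimensional and commutative. I would spell this out by exhibiting, under $p \nmid k$, the idempotent $e = \frac{1}{k}\sum_i e_i \in \mathrm{End}_{\f G}(\f^k)$ projecting onto $\f\mathbf 1$, so that $\mathrm{End}_{\f G}(\f^k) = \f e \oplus (1-e)\mathrm{End}_{\f G}(\f^k)(1-e)$ and the second summand is $\mathrm{End}_{\f G}(W)$; combined with the $2$-dimensional bound this forces $\dim \mathrm{End}_{\f G}(W) = 1$, whence $W$ is irreducible (an $\f G$-module whose endomorphism ring is just $\f$ and which, being a summand of a module killed by no idempotent obstruction, is indecomposable, is simple).

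I expect the main obstacle to be handling the modular subtleties cleanly: in characteristic $p$ one cannot simply invoke Maschke, so the argument that $\mathrm{End}_{\f G}(\f^k)$ has dimension exactly equal to the permutation rank (rather than merely being bounded below by it) needs the hypothesis $p \nmid |G_{1,2}|$ in an essential way — this is what lets one build the complementary idempotent inside the Hecke algebra and rule out nilpotents. Concretely, the Hecke algebra $\f[G_1 \backslash G / G_1]$ is $2$-dimensional spanned by the identity double coset and the double coset $D$ of the off-diagonal; one must check $D^2$ is a linear combination of the identity and $D$ with the coefficient of the identity being a nonzero scalar in $\f$, and that nonvanishing is exactly $|G_{1,2}|^{-1}$ times an integer coprime to $p$ (or similar) — this is where $p \nmid |G_{1,2}|$ and $p \nmid k$ enter to guarantee the Hecke algebra is split semisimple, hence $W$ and $\f\mathbf 1$ are the two distinct simple summands. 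Once the Hecke algebra is shown to be a $2$-dimensional split étale $\f$-algebra, irreducibility of $W$ is immediate. I would therefore organize the write-up as: (1) set up $\f^k = \f\mathbf 1 \oplus W$ using $p \nmid k$; (2) identify $\mathrm{End}_{\f G}(\f^k)$ with the Hecke algebra of the double coset space; (3) compute that Hecke algebra is $2$-dimensional and, using $p \nmid |G_{1,2}|$, that it is semisimple; (4) conclude.
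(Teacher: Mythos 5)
Your Hecke-algebra computation is fine as far as it goes (in fact the count is cleaner than you suggest: for any permutation module $\f X$ the orbital basis shows $\dim_\f\mathrm{End}_{\f G}(\f X)$ \emph{equals} the number of $G$-orbits on $X\times X$ in every characteristic, so no inequality-versus-equality issue arises), but the final inference is a genuine gap. From $\mathrm{End}_{\f G}(\f^k)\cong\f\times\f$ you may conclude that $\f^k=\f\mathbf 1\oplus W$, that $W$ is indecomposable with $\mathrm{End}_{\f G}(W)=\f$, and that the trivial module does not occur in the head or socle of $W$; but in characteristic $p$ the converse of Schur's lemma fails, and an indecomposable module with endomorphism ring $\f$ need not be simple (any rigid non-split extension of two non-isomorphic simples with no homomorphisms between them in either direction is a counterexample). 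Semisimplicity of $\mathrm{End}_{\f G}(M)$ does not imply semisimplicity of $M$: the implication only goes the other way. So step (4), ``irreducibility of $W$ is immediate,'' is exactly the point that is not immediate, and nothing in your use of $p\nmid|G_{1,2}|$ reaches the composition factors of $W$ that could hide strictly between its socle and its head.

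The paper avoids this entirely by a direct argument: given a nonzero submodule $U\subseteq W$ and $x\in U$ with $x_1\neq 0$, it forms $y=\sum_{g\in G_1}gx\in U$, which is constant on coordinates $2,\dots,k$ with $y_2=|G_{1,2}|\sum_{i\geq 2}x_i=-|G_{1,2}|x_1$ and $y_1=|G_1|x_1=(k-1)|G_{1,2}|x_1$, so that $y_1-y_2=k\,|G_{1,2}|\,x_1\neq 0$ by the two divisibility hypotheses; applying an element swapping $1$ and $2$ and subtracting puts a nonzero multiple of $(1,-1,0,\dots,0)$ in $U$, and double transitivity then generates all of $W$. If you want to keep your Hecke-algebra framework, you would need an additional input (for example, an explicit computation showing every nonzero submodule contains $e_1-e_2$, which is essentially the paper's argument anyway) to rule out intermediate composition factors; as written, the proposal does not prove the lemma.
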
 
\begin{proof}
Let $U\subseteq W$ be a non-zero invariant space under the action of $G$. Since the action is doubly transitive, it is enough to show that we have $(1,-1,\underbrace{0,\ldots,0}_{(k-2)\text{ times}})\in U$.

Let $x=(x_1,x_2,\ldots,x_n)\in U$ be nonzero. Then we can assume that $x_1\neq 0$, since $G$ acts transitively on $X$. Considering the element
$y=\displaystyle\sum\limits_{g\in G_1}gx\in U$,
we see that 
\begin{align*}
y_1&=|G_1|x_1\\
y_2&=y_3=\cdots=y_n\\
&=|G_{1,2}|\displaystyle\sum\limits_{i=2}^nx_i,
\end{align*}
since $G$ permutes $X$. Note that $y_i\neq 0$ for all $1\leq i\leq k$. Next taking a $g\in G$, which permutes $1,2$ (this exists since the action is doubly transitive) we see that $(y_1-y_2)(1,-1,0,\ldots,0)\in U$, which finishes the proof. 
\end{proof}
\begin{corollary}\label{s002}
The representation induced by the action of $\GL(3,2)=\PGL(3,2)$ on $\p^2(\f_2)$ has an irreducible degree $6$ component over $\f_{p^k}$, for $p\geq 11$.
\end{corollary}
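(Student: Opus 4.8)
The plan is to obtain the degree $6$ component as a direct application of Lemma~\ref{s001} with $G=\GL(3,2)=\PGL(3,2)$, the set $X=\p^2(\f_2)$, and the field $\f=\f_{p^k}$. First I would record that $X$ has $k=2^2+2+1=7$ points, and that over $\f_2$ each projective point is the unique nonzero vector on the corresponding line, so that the action on $\p^2(\f_2)$ is literally the action of $\GL(3,2)$ on $\f_2^3\setminus\{0\}$. Next I would check that this action is doubly transitive: two distinct points of $\p^2(\f_2)$ correspond to a pair of linearly independent vectors of $\f_2^3$, and given two such ordered pairs one extends each to a basis and takes the change-of-basis matrix in $\GL(3,2)$ carrying the first pair to the second.

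With double transitivity in hand, the arithmetic hypotheses of Lemma~\ref{s001} are immediate from $p\geq 11$. Indeed $p\nmid 7=k$, and for the two-point stabilizer one computes, using $|\GL(3,2)|=168=2^3\cdot 3\cdot 7$, that $|G_1|=168/7=24$ by transitivity and then $|G_{1,2}|=24/6=4$ by the transitivity of $G_1$ on the remaining six points; since $p\geq 11$, $p\nmid |G_{1,2}|=4$. Hence Lemma~\ref{s001} applies and the submodule
\begin{align*}
W=\left\{x\in\f_{p^k}^{\,7}:\sum_{i=1}^{7}x_i=0\right\}
\end{align*}
is an irreducible $\f_{p^k}\GL(3,2)$-module.

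Finally I would note that, since $p\nmid 7$, the all-ones vector spans a $G$-invariant complement to $W$, so the permutation module decomposes as $\f_{p^k}^{\,7}=\f_{p^k}(1,1,\ldots,1)\oplus W$; thus $W$ is an irreducible direct summand of the representation induced by the action on $\p^2(\f_2)$, of degree $7-1=6$, as claimed. I do not anticipate a genuine obstacle here: the only point requiring care is the bookkeeping for $|G_{1,2}|$ and the verification that the hypotheses of Lemma~\ref{s001} hold verbatim for $\f=\f_{p^k}$, which is automatic because that lemma is stated for an arbitrary field of characteristic $p$.
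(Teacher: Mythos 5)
Your proposal is correct and follows the same route as the paper: apply Lemma~\ref{s001} to the $7$-point permutation action after checking double transitivity and the divisibility hypotheses. The paper simply cites Hirschfeld for double transitivity and observes $p\nmid|G_{1,2}|$ via Lagrange from $p\nmid 168$, whereas you verify these facts directly (including the exact count $|G_{1,2}|=4$ and the splitting off of the all-ones line), which is sound but not a different argument.
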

\begin{proof}We know that the action of $\GL(3,2)$ on $\p^2(\f_2)$ is doubly transitive (see \cite[pp. 124]{H}). Since $G_{1,2}$ is a subgroup of $\GL(3,2)$ and $p\nmid |G|$, the result follows from Lemma \ref{s001}.
\end{proof}
\begin{corollary}\label{s003}
The representation induced by the action of $\GL(3,2)\cong\PGL(2,7)$ on $\p^1(\f_7)$ has an irreducible degree $7$ component over $\f_{p^k}$, for $p\geq 11$.
\end{corollary}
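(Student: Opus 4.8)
The plan is to argue exactly as in Corollary~\ref{s002}, but now with $X=\p^1(\f_7)$, the projective line over $\f_7$, which has $k=8$ points, and $G=\GL(3,2)\cong\PGL(2,7)$ acting on $X$ in the natural way. First I would recall the standard fact that this action on the projective line is doubly transitive (see \cite{H}); this is the one input beyond arithmetic that Lemma~\ref{s001} requires about the action.

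Next I would check the two divisibility hypotheses of Lemma~\ref{s001} for the field $\f=\f_{p^k}$ with $p\geq 11$. Since $|\GL(3,2)|=168=2^3\cdot 3\cdot 7$ and $p\geq 11$, we have $p\nmid |G|$; in particular $p\nmid k=8$, and $p\nmid |G_{1,2}|$ where $G_{1,2}\leq G$ is the two-point stabilizer (of order $168/(8\cdot 7)=3$). Hence Lemma~\ref{s001} applies and shows that
\[
W=\Bigl\{x\in\f_{p^k}^8:\textstyle\sum_{i=1}^{8}x_i=0\Bigr\}
\]
is an irreducible $\f_{p^k}G$-module, of dimension $k-1=7$.

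Finally, since $p\nmid 8$, the all-ones vector spans a $G$-invariant line complementary to $W$, so the $8$-dimensional permutation module over $\f_{p^k}$ splits as the trivial module $\oplus\, W$; thus $W$ is a genuine irreducible degree $7$ component of the (semisimple) algebra $\f_{p^k}G$, as claimed. I do not expect a real obstacle here: the only points needing care are quoting the correct transitivity property of the $\PGL(2,7)$-action on $\p^1(\f_7)$ and noting that, because $p\nmid k$, the $7$-dimensional constituent is an honest direct summand (hence a simple component) and not merely a composition factor.
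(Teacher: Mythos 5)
Your proposal is correct and takes essentially the same route as the paper: both invoke the double transitivity of the action on the $8$ points of $\p^1(\f_7)$ and apply Lemma \ref{s001} after noting that $p\geq 11$ cannot divide $8$ or the order of the two-point stabilizer (a divisor of $168$). Your extra remarks --- computing $|G_{1,2}|=3$ explicitly and observing that the sum-zero subspace is a genuine direct summand because the all-ones line is an invariant complement --- are correct refinements that the paper leaves implicit.
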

\begin{proof}
The action of the group $\PGL(2,7)$ on $\p^1(\f_7)$, is transitive, as well as doubly transitive (see \cite[pp. 157]{H}).  We see that $p\nmid |G_{1,2}|$, as $G_{1,2}$ is a subgroup of $\PGL(3,2)$ and $p\nmid 168$.
\end{proof}
\smallskip
\begin{remark}
Note that in Corollaries \ref{s002} and \ref{s003} the prime $p$ can be chosen lesser than $11$.
\end{remark}
\begin{remark}
Using Lemma \ref{s001}, it can be seen that the regular representation of the symmetric group $S_n$, decomposes into the trivial representation and an irreducible representation of degree $n-1$ over the field $\f_{p^k}$, whenever $p>n$.
\end{remark}
\smallskip
\begin{lemma}\label{s004}
Let $A_i$, $1\leq i\leq n$ be a family of unital algebra with unit $1_i$ and $\mathcal{D}_i$ be the set of representatives of simple $A_i$-modules. Then any simple $\bigoplus\limits_{i=1}^nA_i$-module is of the form $\bigoplus\limits_{i=1}^nM_{i}$, where not all $M_{i}$'s are zero and $M_i\in\mathcal{D}_i$.
\end{lemma}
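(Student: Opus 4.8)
The plan is to reduce the classification of simple modules over the direct sum $A := \bigoplus_{i=1}^n A_i$ to the classification over each factor, using the idempotent structure coming from the unit elements $1_i$. First I would observe that each $1_i$, viewed inside $A$ as the tuple with $1_i$ in the $i$th slot and $0$ elsewhere, is a central idempotent of $A$, that these idempotents are pairwise orthogonal, and that $\sum_{i=1}^n 1_i = 1_A$. Consequently, for any $A$-module $M$ we get an internal direct sum decomposition $M = \bigoplus_{i=1}^n 1_i M$, where each $1_i M$ is an $A$-submodule (by centrality of $1_i$) on which $A_j$ acts as zero for $j \neq i$, hence $1_i M$ is naturally an $A_i$-module.

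Next I would apply this to a simple $A$-module $S$. Since $S = \bigoplus_{i=1}^n 1_i S$ is a direct sum of $A$-submodules and $S$ is simple and nonzero, exactly one summand is nonzero, say $1_{i_0} S = S$ and $1_i S = 0$ for $i \neq i_0$. Then $S$, regarded as an $A_{i_0}$-module, must be simple: any $A_{i_0}$-submodule of $S$ is automatically an $A$-submodule (the other factors act as zero), so simplicity as an $A$-module forces simplicity as an $A_{i_0}$-module. Thus $S \cong M_{i_0}$ for some $M_{i_0} \in \mathcal{D}_{i_0}$, and setting $M_i = 0$ for $i \neq i_0$ exhibits $S = \bigoplus_{i=1}^n M_i$ in the claimed form, with not all $M_i$ zero. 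Conversely — though the statement as phrased only asks for the forward description — one checks readily that each such $\bigoplus_i M_i$ with exactly one nonzero simple summand is indeed a simple $A$-module, so the description is exhaustive.

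I do not anticipate a genuine obstacle here; the only point requiring a little care is the bookkeeping that an $A_{i_0}$-submodule of $1_{i_0}S$ really is closed under the action of all of $A$, which is immediate once one notes $a \cdot m = (1_{i_0} a 1_{i_0}) \cdot m$ for $m \in 1_{i_0} S$ because the off-diagonal components of $a$ kill $m$. One should also state explicitly that the decomposition $M = \bigoplus_i 1_i M$ is a decomposition of $A$-modules (not merely of abelian groups), which again follows from each $1_i$ being central. With these remarks in place the proof is a short formal argument.
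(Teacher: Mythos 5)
Your proposal is correct and uses the same idea as the paper: decomposing a module over $\bigoplus_{i=1}^nA_i$ via the orthogonal central idempotents $1_i$, i.e.\ $M=\bigoplus_{i=1}^n 1_iM$. In fact your writeup is more complete than the paper's, which stops at the decomposition $M=\bigoplus_i MA_i$ and leaves implicit the step you spell out --- that for a simple module exactly one summand survives and that summand is simple over the corresponding factor $A_{i_0}$.
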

\begin{proof}
Since $1_{\bigoplus\limits_{i=1}^nA_i}=\displaystyle\sum\limits_{i=1}^n 1_{A_i}$ and hence for any $\bigoplus\limits_{i=1}^nA_i$-module $M$, we have
\begin{align*}
    M&=M\cdot1_{\bigoplus\limits_{i=1}^nA_i}\bigoplus\limits_{i=1}^nA_i\\
    &=\bigoplus_{i=1}^nMA_i.
\end{align*}
\end{proof}
\begin{lemma}\cite[Example 3.3]{P}\label{s005}
For any division algebra (in particular field) $D$, the only simple $M(n,D)$ module is $D^n$ upto isomorphism.
\end{lemma}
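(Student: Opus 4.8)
The plan is to establish two things separately: that $D^n$ --- the space of column vectors, with $\m(n,D)$ acting by left multiplication and $D$ acting by scalars on the right --- is a simple $\m(n,D)$-module, and that every simple $\m(n,D)$-module is isomorphic to it. The second point is where the content lies; it will come from decomposing the regular module and invoking semisimplicity of $\m(n,D)$.

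First I would check simplicity. Take an arbitrary nonzero $v=\tr(v_1,\ldots,v_n)\in D^n$ and show $\m(n,D)v=D^n$. Choose $r$ with $v_r\neq 0$; since $D$ is a division ring, $v_r$ is invertible, so if $A$ denotes the matrix with $(s,r)$-entry equal to $v_r^{-1}$ and all other entries $0$, then $Av=e_s$, the $s$-th standard basis vector. Hence $e_s\in\m(n,D)v$, and therefore $D^n=\m(n,D)e_s\subseteq\m(n,D)v$, because the image of $e_s$ under left multiplication runs over all column vectors. As $v$ was an arbitrary nonzero element, $D^n$ has no proper nonzero submodule. For uniqueness I would use the column decomposition of the regular module: let $C_j\subseteq\m(n,D)$ be the set of matrices supported in the $j$-th column, which is a left ideal, and note that the map sending a matrix to its $j$-th column is an isomorphism $C_j\xrightarrow{\ \sim\ }D^n$ of left $\m(n,D)$-modules, with $\m(n,D)=\bigoplus_{j=1}^n C_j$. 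Thus $\m(n,D)$, as a left module over itself, is a direct sum of $n$ copies of the simple module $D^n$; in particular it is semisimple. Now if $S$ is any simple $\m(n,D)$-module and $0\neq s\in S$, the map $a\mapsto as$ is a surjection $\m(n,D)\twoheadrightarrow S$, so $S$ is a simple quotient --- hence, by semisimplicity, a direct summand --- of $\bigoplus_{j=1}^n D^n$, which forces $S\cong D^n$.

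I do not expect a genuine obstacle in this lemma; it is essentially the module-theoretic side of the Artin--Wedderburn picture. The only point that merits care is the bookkeeping of sides: one must keep the $\m(n,D)$-action on $D^n$ on the left and the $D$-scalars on the right, so that the inversion of $v_r$ in the simplicity argument is legitimate without commuting a scalar past a matrix, and so that the column maps $C_j\to D^n$ are honestly left-linear.
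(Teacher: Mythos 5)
Your argument is correct and complete: the simplicity of the column module $D^n$, the decomposition of the regular module into the $n$ column ideals $C_j\cong D^n$, and the realization of an arbitrary simple module as a quotient of the regular module together give exactly the claimed uniqueness. The paper itself supplies no proof of this lemma --- it is quoted from \cite[Example 3.3]{P} --- and your route is the standard one recorded there; the side-of-scalars bookkeeping you flag (matrices on the left, $D$-scalars on the right, so that inverting $v_r$ never requires commuting past a matrix entry) is indeed the only delicate point, and you handle it correctly.
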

\smallskip
\begin{corollary}\label{s006}
Let $G$ be a finite group, $k$ be a finite field of characteristic $p>0$, $p\nmid |G|$. Then if there exists an irreducible representations of degree $n$ over $k$, then one of the component of $kG$ is of the form $M(n,k)$.
\end{corollary}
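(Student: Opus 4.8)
The plan is to combine the hypothesis that $p \nmid |G|$ (so that $kG$ is semisimple by Maschke's theorem) with the structural lemmas already established. First I would invoke Maschke to write $kG = \bigoplus_{i=1}^{m} M(n_i, D_i)$ as a direct sum of matrix algebras over division $k$-algebras $D_i$, using Artin--Wedderburn. By Lemma \ref{s005}, the unique simple module over the block $M(n_i, D_i)$ is $D_i^{n_i}$, and by Lemma \ref{s004} every simple $kG$-module is (isomorphic to) one of these $D_i^{n_i}$, sitting in a single block and zero elsewhere. Conversely, an irreducible representation of $G$ over $k$ of degree $n$ is precisely a simple $kG$-module of $k$-dimension $n$; so the hypothesis hands us an index $i$ with $\dim_k D_i^{n_i} = n_i \cdot [D_i : k] = n$.

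The next step is to pin down $D_i$. The cleanest route is to observe that the representation in question gives an absolutely irreducible or at least $k$-irreducible module; if we are content to produce a block of the stated form $M(n,k)$ literally, we want $D_i = k$, i.e.\ $n_i = n$. I would argue this using the standard fact (Schur's lemma) that $D_i = \operatorname{End}_{kG}(D_i^{n_i})^{\mathrm{op}}$, together with the assumption — implicit in the phrase ``irreducible representation of degree $n$ over $k$'' as used in Corollaries \ref{s002} and \ref{s003} — that the relevant simple module is realized with commuting endomorphism ring exactly $k$. Under that reading, $[D_i:k]=1$, hence $n_i = n$ and the block is $M(n,k)$, as claimed.

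The main obstacle — really the only subtle point — is the identification $D_i = k$ rather than some genuine extension field of $k$. If one does not assume the endomorphism ring is $k$, then all one can conclude is that some block has the form $M(n_i, D_i)$ with $n_i [D_i:k] = n$, and $D_i$ a field (commutative, since $k$ is finite and finite division rings are fields by Wedderburn). In the applications in this paper the fields $k = \f_{p^k}$ are taken large enough (e.g.\ $p \geq 11$) that the character values lie in $k$ and the representations constructed in Corollaries \ref{s002} and \ref{s003} are in fact absolutely irreducible, so $D_i = k$; I would simply note that the corollary is applied only in that situation, or alternatively restate it as ``one component is $M(n_i, D_i)$ with $n_i [D_i : k] = n$'' and let the later dimension count over $\f_q$ do the bookkeeping.
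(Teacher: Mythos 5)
Your proposal follows the same route as the paper's own proof --- Maschke's theorem plus Artin--Wedderburn, then Lemmas \ref{s004} and \ref{s005} to match the given irreducible module with one of the blocks --- but you are more careful at exactly the point where the paper's argument is too quick. The paper simply asserts that Lemmas \ref{s004} and \ref{s005} yield an index $i$ with $n_i=n$ and $k_i=k$; as you observe, what those lemmas actually give is an index $i$ for which the simple module $k_i^{\,n_i}$ of the block $M(n_i,k_i)$ has $k$-dimension $n_i\,[k_i:k]=n$, and nothing forces $k_i=k$ without the extra input that $\operatorname{End}_{kG}$ of the given module is $k$ itself. The statement as literally written is in fact false without that input: for $G=\mathbb{Z}/3\mathbb{Z}$ and $k=\f_2$ one has $kG\cong \f_2\oplus\f_4$, and $\f_4$ is an irreducible $kG$-module of degree $2$ over $\f_2$, yet no component is $M(2,\f_2)$. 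So you have correctly located the one subtle point, and your two proposed repairs are both legitimate.

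For the paper's purposes the first repair is the relevant one, and it can be made concrete: the degree-$6$ and degree-$7$ modules of Corollaries \ref{s002} and \ref{s003} are the nontrivial summands $W$ of doubly transitive permutation modules $k^X$. Since $\dim_k\operatorname{End}_{kG}(k^X)$ equals the number of $G$-orbits on $X\times X$, which is $2$ for a doubly transitive action, and one dimension is accounted for by the trivial summand, one gets $\operatorname{End}_{kG}(W)=k$; hence the corresponding block is genuinely $M(n,\f_q)$ and the later dimension count in Proposition \ref{s007} goes through. Either adding an absolute-irreducibility hypothesis (verified as above) or weakening the conclusion to $n_i\,[k_i:k]=n$ would make the corollary and its proof correct; the paper's version, as written, has a gap that your proposal repairs.
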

\begin{proof}
Since $p\nmid|G|$, by Maschke's theorem $kG$ is semisimple. Hence by Artin–Wedderburn theorem we have that 
\begin{align*}
    kG=\bigoplus\limits_{i=1}^nM({n_i},k_i),
\end{align*}
where $k_i$'s are finite extensions of $k$ (hence a field). It follows from Lemma \ref{s004} and Lemma \ref{s005} that for some $i$, we have $n_i=n,k_i=k$. Hence the result follows.
\end{proof}
\begin{corollary}\label{s010}
Two of the components of the group algebra $\f_q\SL(3,2)$ are $\m(6,\f_q),\m(7,\f_q)$.
\end{corollary}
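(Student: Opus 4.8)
The plan is to simply assemble the pieces already established. Since $|\SL(3,2)|=168=2^3\cdot 3\cdot 7$, for any prime $p\geq 11$ we have $p\nmid |\SL(3,2)|$, so Maschke's theorem applies and $\f_q\SL(3,2)$ is semisimple; its Artin--Wedderburn decomposition therefore has the form $\bigoplus_i \m(n_i,k_i)$ with each $k_i$ a finite extension of $\f_q$, exactly as in the proof of Corollary \ref{s006}. It thus suffices to exhibit irreducible $\f_q$-representations of degrees $6$ and $7$ and invoke Corollary \ref{s006}.

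First I would apply Corollary \ref{s002}: the permutation action of $\GL(3,2)=\PGL(3,2)$ on $\p^2(\f_2)$ (seven points) contains an irreducible $\f_q$-subrepresentation of degree $6$, namely the "sum-zero" submodule $W$. By Corollary \ref{s006} (with $n=6$, $k=\f_q$), the algebra $\f_q\SL(3,2)$ has a Wedderburn component equal to $\m(6,\f_q)$. Next I would apply Corollary \ref{s003}: under the isomorphism $\SL(3,2)\cong\PGL(2,7)$, the action on $\p^1(\f_7)$ (eight points) is doubly transitive with point stabilizers of order prime to $p$, so it contains an irreducible $\f_q$-subrepresentation of degree $7$. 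Again by Corollary \ref{s006} (with $n=7$), $\f_q\SL(3,2)$ has a component equal to $\m(7,\f_q)$.

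Finally I would note that these two components are genuinely distinct summands in the decomposition: as $\f_q$-algebras $\m(6,\f_q)$ and $\m(7,\f_q)$ have dimensions $36$ and $49$ respectively, hence are non-isomorphic, so by the uniqueness (up to reordering) of the Artin--Wedderburn decomposition they occur as two different blocks. This gives the claim.

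There is essentially no serious obstacle here, since the representation-theoretic content was already extracted in Corollaries \ref{s002}, \ref{s003}, and \ref{s006}; the only point requiring a moment's care is confirming the hypotheses of Corollary \ref{s006}, in particular that $p\nmid|\SL(3,2)|$ (true for $p\geq 11$) and that the relevant point-stabilizer subgroups indeed have order prime to $p$ (immediate, as they are subgroups of a group of order $168$), together with the observation that distinct matrix sizes force distinct blocks.
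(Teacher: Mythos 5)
Your proposal is correct and follows exactly the paper's route: the authors likewise deduce Corollary \ref{s010} directly from Corollaries \ref{s002}, \ref{s003} and \ref{s006}. Your added remark that the two blocks are distinct because $\m(6,\f_q)\not\cong\m(7,\f_q)$ is a harmless (and reasonable) elaboration of the same argument.
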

\begin{proof}
This follows immediately from Corollaries \ref{s002}, \ref{s003} and \ref{s006}.
\end{proof}
\section{Units in $\f_q\SL(3,2)$}
\begin{proposition}\label{n004}
Let $\f_q$ be a field of characteristic $p$ and $p$ $\geq$ $11$ and $q= p^k$. Let $G$ be the group $\SL(3,2)$. Then the Artin-Wedderburn decomposition of $\f_qG$ is one of the following:
\begin{center}
\centering
$    \f_q\oplus \bigoplus\limits_{i=1}^5M(n_i,\f_q)$,\\
 $   \f_q\oplus \bigoplus\limits_{i=1}^3M(n_i,\f_q)\oplus M(n_4,\f_{q^2})$ 
\end{center}
\end{proposition}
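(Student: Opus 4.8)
The plan is to pin down the Artin–Wedderburn decomposition of $\f_qG$ by combining a dimension count with the representation theory over $\overline{\f_q}$ and the cyclotomic $\f_q$-class machinery of Lemmas \ref{n001} and \ref{n002}. First I would note that since $p\geq 11$ we have $p\nmid |G| = 168 = 2^3\cdot 3\cdot 7$, so by Maschke's theorem $\f_qG$ is semisimple and $J(\f_qG)=0$; hence $\f_qG = \bigoplus_i M(n_i,k_i)$ for finite extensions $k_i/\f_q$. The character table of $\SL(3,2)\cong\mathrm{PSL}(2,7)$ over $\mathbb{C}$ (equivalently over $\overline{\f_q}$, since $p\nmid|G|$) has $7$ irreducible characters of degrees $1, 3, 3, 6, 7, 8, 8$, and these satisfy $1^2+3^2+3^2+6^2+7^2+8^2+8^2 = 168$. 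So over $\overline{\f_q}$ the decomposition is forced; the only question is how the Galois group $\mathrm{Gal}(\overline{\f_q}/\f_q)$ acts on these $7$ characters, i.e.\ how they group into $\f_q$-rational components.

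Next I would invoke Lemma \ref{n001}: the number of simple components of $\f_qG$ equals the number of cyclotomic $\f_q$-classes of $G$. The exponent of $G$ is $e = \mathrm{lcm}(1,2,3,4,7) = 84$, and the relevant roots of unity live in $\f_q(\eta)$ where $\eta$ is a primitive $r$th root of unity with $r \mid 84$, $p\nmid r$ (automatic here). Using the conjugacy-class data in the table — in particular the relations \eqref{ne010} and \eqref{ne011} which say that $\alpha_5,\alpha_5^2,\alpha_5^4$ are all conjugate and $\alpha_5^3,\alpha_5^5,\alpha_5^6$ are all conjugate (so the two order-$7$ classes $\C_5,\C_6$ are interchanged by the maps $g\mapsto g^l$ exactly according to whether $l$ is a square mod $7$) — one checks that for $p\geq 11$ the number of cyclotomic $\f_q$-classes is either $7$ or $6$: it is $7$ precisely when the relevant Galois action fixes both order-$7$ classes separately (equivalently $q$ is a square mod $7$, i.e.\ $q\equiv 1,2,4\pmod 7$), and it is $6$ when the Galois action swaps $\C_5$ and $\C_6$ (i.e.\ $q\equiv 3,5,6\pmod 7$). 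In the first case all $7$ complex irreducibles are realizable over $\f_q$ (the two degree-$8$ characters are rational, the two degree-$3$ characters have character values in $\mathbb{Q}(\sqrt{-7})$ but by Lemma \ref{n002} and the count of $7$ classes each must still give a component with residue field $\f_q$), yielding $\f_q\oplus\bigoplus_{i=1}^5 M(n_i,\f_q)$ with $\{n_i\} = \{3,3,6,7,8\}$... and in fact we only need that there are $6$ matrix blocks over $\f_q$. In the second case the two degree-$3$ characters fuse into a single component $M(3,\f_{q^2})$, giving $\f_q\oplus\bigoplus_{i=1}^3 M(n_i,\f_q)\oplus M(3,\f_{q^2})$.

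To make the second case rigorous I would argue as follows: the two nontrivial-but-small irreducible characters of degree $3$ are complex-conjugate, with values in $\mathbb{Q}(\sqrt{-7})$, and $\sqrt{-7}\in\f_q$ iff $-7$ is a square in $\f_q$ iff (by quadratic reciprocity, using $p\geq 11$) $q$ is a square mod $7$. When $-7\notin\f_q$ these two characters are a single Galois orbit of size $2$, so they contribute one Wedderburn component with center $\f_q(\sqrt{-7}) = \f_{q^2}$; by Lemma \ref{n002} this component has center of degree $2$ over $\f_q$, matching the cyclotomic-class size $2$, and being a matrix algebra over its center (finite, hence no noncommutative division algebras by Wedderburn's little theorem) it is $M(3,\f_{q^2})$ with the right dimension $2\cdot 3^2 = 18 = 3^2+3^2$. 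All remaining characters ($1,6,7,8,8$) are rational-valued and give split components $M(n_i,\f_q)$. The main obstacle is the bookkeeping: verifying that the degree-$8$ characters and the degree-$6,7$ characters are genuinely $\f_q$-rational (i.e.\ have Schur index $1$ and rational character values over $\f_q$) for every $p\geq 11$, and confirming via \eqref{ne010}–\eqref{ne011} that no further fusions occur beyond the possible degree-$3$ fusion — equivalently that the cyclotomic $\f_q$-class count is exactly $6$ or $7$ and never smaller. Over finite fields the Schur index is always $1$, so rationality of character values is the only real point, and that follows from the known character table of $\mathrm{PSL}(2,7)$ together with the observation that $\f_q$ contains all $d$th roots of unity relevant to classes of element-order $d\in\{1,2,3,4\}$ once one tracks the order-$7$ classes separately. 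Thus exactly the two listed decompositions arise, according to whether $7$ is a square in $\f_q$ or not.
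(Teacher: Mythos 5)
Your overall strategy is the right one and largely coincides with the paper's: Maschke's theorem gives semisimplicity, the number of simple components is the number of cyclotomic $\f_q$-classes (Lemma \ref{n001}), the sizes of those classes give the degrees $[K_i:\f_q]$ (Lemma \ref{n002}), the classes of elements of order $1,2,3,4$ are rational so never fuse, and the only possible fusion is between the two order-$7$ classes, which happens exactly when $q$ is a nonresidue mod $7$ (equivalently $\sqrt{-7}\notin\f_q$). That criterion matches the paper's type $1$/type $2$ table. But there is a concrete factual error at the start that infects your counting: $\SL(3,2)\cong\mathrm{PSL}(2,7)$ has \emph{six} conjugacy classes (the paper's own table lists $\C_1,\dots,\C_6$ with $1+21+56+42+24+24=168$), hence six complex irreducible characters, of degrees $1,3,3,6,7,8$ with $1+9+9+36+49+64=168$. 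Your claimed list $1,3,3,6,7,8,8$ of seven characters has $\sum d_i^2=232\neq 168$, and there is no second degree-$8$ character. As a result your statement that ``the number of cyclotomic $\f_q$-classes is either $7$ or $6$'' is off by one and directly contradicts the decompositions you then write down, which have $6$ (resp.\ $5$) simple summands; by Lemma \ref{n001} those force $6$ (resp.\ $5$) cyclotomic classes. The correct counts are $6$ and $5$, and with that correction your argument closes.

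Two smaller points. First, for the proposition as stated you do not need the character degrees at all (that is the content of Propositions \ref{s007}--\ref{s008} in the paper); you only need the number of cyclotomic classes and their sizes, which is exactly how the paper argues, case by case on $p \bmod 7$ and $k \bmod 6$, using \eqref{ne010} and \eqref{ne011}. Second, your identification of the fusion condition via $\mathbb{Q}(\sqrt{-7})$ and the fact that Schur indices over finite fields are trivial is correct and is a clean supplement to the paper's more computational verification, but it should be phrased in terms of whether some power $q^j$ is a nonresidue mod $7$ (equivalently whether $q$ itself is), since $I_{\f_q}$ is generated by the Frobenius.
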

\begin{proof}
Since $p\nmid |G|$, by Maschke's theorem we have $\f_qG$ is semisimple and hence $J(\f_qG)$ is zero. By its Wedderburn decomposition we have $\f_qG$ is isomorphic to $\bigoplus\limits_{i=1}^nM({n_i},K_i)$, where $n_i > 0$ and $K_i$ is a finite extension of $\f_q$, for all $1\leq i\leq n$.

Firstly from Lemma \ref{n003}, we have \begin{equation}\label{ne001}
    \f_qG \cong \f_q \bigoplus\limits_{i=1}^{n-1}M({n_i},K_i),
\end{equation}
taking $h$ to be the augmentation map.
Now to compute these $n_i$'s and $K_i$'s we calculate the cyclotomic $\f_q$ classes of $G$. We do this in $6$ cases, for $k=6l+i$ , $0\leq i \leq 5$. Note that $p$ can have the following possibilities, being a prime
\begin{align*}
    p\in\{\pm 1\}&\mod 4,\\
    p\in\{\pm 1\}&\mod 3,\\
    p\in\{\pm 1,\pm 2,\pm 3\}&\mod 7.
\end{align*}
\begin{enumerate}
    \item \underline{The case $(k= 6l)$:} In this case $p^k \equiv 1\mod 7,p^k \equiv 1\mod4\text{ and }p^k \equiv 1\mod 3$, hence $p^k \equiv 1\mod84$ (using Chinese Remainder theorem). Thus $I_{\f_q}=\{ 1\}$ and $S_{\f_q}(\gamma_g)=\{\gamma_g\}$ for all $g \in G$. Thus by Lemma \ref{n001}, Lemma \ref{n002} and Equation \ref{ne001}
\begin{align*}
\f_qG\cong    \f_q\oplus \bigoplus\limits_{i=1}^5M(n_i,\f_q).
\end{align*} 
When such a decomposition arises, we say that $(p,k)$ is of type $1$.
 \item \underline{The case $(k= 6l+1)$:} In this case if $p\equiv \pm 1\mod 3,p\equiv\pm 1\mod 4$ and $p \equiv  1,2,-3\mod7$, $S_{\f_q}(\gamma_g)$ = $\{\gamma_g\}$ for all $g \in G$, because we have 
 \begin{align*}
     [\alpha_2]=[\alpha_2^{-1}],     [\alpha_3]=[\alpha_3^{-1}],     [\alpha_4]=[\alpha_4^{-1}].
 \end{align*}
 Once again by  Lemma \ref{n001} and Lemma \ref{n002} and Equation \ref{ne001} \begin{align*}
\f_qG\cong    \f_q\oplus \bigoplus\limits_{i=1}^5M(n_i,\f_q).
\end{align*} i.e $(p,k)$ is of type $1$.
Now if $p \equiv  -1,-2,3\mod7$, then we get that $S_{\f_q}(\gamma_g)=\{\gamma_g\}$ for $g \in \{\alpha_1,\alpha_2,\alpha_3,\alpha_4 \}$ and  $S_{\f_q}(\gamma_g) = (\gamma_g , \gamma_{g^{-1}})$ when $g\in\{\alpha_5,\alpha_6\}$ since $[\alpha_5]$ $\neq$ $[\alpha_5^{-1}]$.
  Hence in this case we have
\begin{align*}
\f_qG\cong    \f_q\oplus \bigoplus\limits_{i=1}^3M(n_i,\f_q)\oplus M(n_4,\f_{q^2}).
\end{align*} When such a decomposition arises, we say that $(p,k)$ is of type $2$.
\end{enumerate}
\smallskip
It can be further shown using Equation \ref{ne010} and Equation \ref{ne011} that $(p,k)$ is either of type $1$ or $2$. The 
possibilities are listed in the table below.
\begin{center}
\begin{tabular}{|c|c|c|}
\hline
$\text{$ p\mod 7$}$&$\text{$k$}$&$\text{Type of $(p,k)$}$\\
\hline $\pm 1,\pm 2 ,\pm 3$ & $6l$  & $1$\\
\hline $1,2,-3$&$6l+1$ &$1$\\
\hline$-1,-2,3 $&$6l+1$ &$2$\\
\hline$\pm 1,\pm 2 ,\pm 3$  &$6l+2$&$1$\\
\hline $1,2,-3$&$6l+3$&$1$\\
\hline $-1,-2,3$&$6l+3$ &$2$\\
\hline$\pm 1,\pm 2 ,\pm 3$  &$6l+4$ &$1$\\
\hline $1,2,-3$&$6l+5$ &$1$\\
\hline $-1,-2,3$&$6l+5$ &$2$\\
\hline
\end{tabular}
\end{center}
\end{proof}
\begin{proposition}\label{s007}
We have $(n_1,n_2,n_3,n_4,n_5,n_6)=(1,6,7,8,3,3)$ up to some permutation.
\end{proposition}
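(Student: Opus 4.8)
The plan is to read off the block sizes from Proposition~\ref{n004} together with Corollary~\ref{s010} and a count of $\f_q$-dimensions. Write $G=\SL(3,2)$, so $|G|=168=2^3\cdot 3\cdot 7$ and, as $p\geq 11$, the algebra $\f_qG$ is semisimple. By Proposition~\ref{n004} it is isomorphic either to $\f_q\oplus\bigoplus_{i=1}^5 M(n_i,\f_q)$ (type $1$) or to $\f_q\oplus\bigoplus_{i=1}^3 M(n_i,\f_q)\oplus M(n_4,\f_{q^2})$ (type $2$). In both cases the summand $\f_q=M(1,\f_q)$ is the block coming from the trivial representation, and by Corollary~\ref{s010} two further blocks are $M(6,\f_q)$ and $M(7,\f_q)$; since $1,6,7$ are distinct these are three pairwise distinct components, and the latter two are realised over $\f_q$ rather than over $\f_{q^2}$, so in type~$2$ they sit among $M(n_1,\f_q),M(n_2,\f_q),M(n_3,\f_q)$.

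Next I would compare dimensions. Since $\dim_{\f_q}M(n,\f_q)=n^2$ and $\dim_{\f_q}M(n,\f_{q^2})=2n^2$, subtracting the contributions $1$, $36$, $49$ of the three identified blocks from $\dim_{\f_q}\f_qG=168$ leaves $82$ to be accounted for by the remaining components. In type~$1$ there are three more blocks, giving $a^2+b^2+c^2=82$ with $a,b,c\geq 1$; since the largest of the three lies between $6$ and $8$, a short case check shows the only solution up to order is $\{a,b,c\}=\{3,3,8\}$. In type~$2$ there is one remaining $\f_q$-block $M(c,\f_q)$ and the block $M(d,\f_{q^2})$, giving $c^2+2d^2=82$; as $2d^2>82$ for $d\geq 7$, running through $d\in\{1,\dots,6\}$ leaves only $c=8,d=3$. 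Thus in type~$1$ the multiset of block sizes is exactly $\{1,6,7,8,3,3\}$, while in type~$2$ one has the $\f_q$-blocks $M(1,\f_q),M(6,\f_q),M(7,\f_q),M(8,\f_q)$ together with $M(3,\f_{q^2})$, the last of which over a splitting field splits as a sum of two Galois-conjugate degree-$3$ simple modules, again contributing the pair $3,3$. In either case $(n_1,\dots,n_6)=(1,6,7,8,3,3)$ up to permutation.

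I do not expect a real obstacle here: once Proposition~\ref{n004} and Corollary~\ref{s010} are available, the statement reduces to the elementary Diophantine analysis above. The only points that need a word of care are that the three blocks $M(1,\f_q)$, $M(6,\f_q)$, $M(7,\f_q)$ are genuinely distinct summands, so that their dimensions may be removed without double counting, and the bookkeeping of how the single component $M(3,\f_{q^2})$ in the type~$2$ case accounts for two of the $n_i$'s.
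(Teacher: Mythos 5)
Your argument is correct and follows essentially the same route as the paper: identify the blocks $M(1,\f_q)$, $M(6,\f_q)$, $M(7,\f_q)$ via Corollary~\ref{s010}, subtract their dimensions from $168$, and solve the resulting Diophantine condition on the remaining $82$ dimensions, with $8^2+3^2+3^2$ the only solution. You are in fact slightly more thorough than the paper, whose proof only records the type-$1$ equation $n_4^2+n_5^2+n_6^2=82$; your separate verification that $c^2+2d^2=82$ forces $c=8$, $d=3$ in the type-$2$ case, and your remark on how $M(3,\f_{q^2})$ accounts for the pair $3,3$, make explicit a step the paper leaves implicit.
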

\begin{proof}
By Corollary \ref{s010}, we have that for some $n_i=6,n_j=7$ for some $i,j\in\{1,2,\ldots,6\}$. Let us assume $n_2=6,n_3=7$. Since $n_1=1$, we are left with the equation $n_4^2+n_5^2+n_6^2=82$, with all $n_i>0$. Since the only possibility is $8^2+3^2+3^2$, we are done.
\end{proof}
\begin{proposition}\label{s008}
Let $\f_q$ be a field of characteristic $p$ and $p$ $\geq$ $11$ and $q= p^k$. Let $G$ be the group $\SL(3,2)$. Then the Wedderburn decomposition of $\f_qG$ is as follows :
\begin{align*}
    \f_q\oplus \m(6,\f_q)\oplus\m(7,\f_q)\oplus\m(8,\f_q)\oplus
    \m(3,\f_q)^2&\text{ if $(p,k)$ is of type $1$},\\
    \f_q\oplus \m(6,\f_q)\oplus\m(7,\f_q)\oplus\m(8,\f_q)\oplus\m(3,\f_{q^2})&\text{ if $(p,k)$ is of type $2$}.
\end{align*}
\end{proposition}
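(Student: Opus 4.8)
The plan is to split along the two types produced by Proposition \ref{n004} and, in each case, to pin down the matrix sizes using Proposition \ref{s007}, Corollary \ref{s010}, and a dimension count; recall that $|G| = 168$.

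In the type 1 case Proposition \ref{n004} already gives $\f_qG \cong \f_q \oplus \bigoplus_{i=1}^{5} \m(n_i,\f_q)$, and Proposition \ref{s007} says that the multiset $\{1,n_1,\dots,n_5\}$ equals $\{1,3,3,6,7,8\}$. Removing the one copy of $1$ coming from the trivial block identifies $\{n_1,\dots,n_5\} = \{3,3,6,7,8\}$, which is exactly the first displayed decomposition; nothing further is needed here.

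In the type 2 case Proposition \ref{n004} gives $\f_qG \cong \f_q \oplus \m(n_1,\f_q) \oplus \m(n_2,\f_q) \oplus \m(n_3,\f_q) \oplus \m(n_4,\f_{q^2})$. The extra ingredient is Corollary \ref{s010}, whose proof is independent of the type and so applies for every $p \ge 11$: the blocks $\m(6,\f_q)$ and $\m(7,\f_q)$ occur. As their degrees are distinct and different from $1$, after relabelling I may take $n_1 = 6$ and $n_2 = 7$. Comparing $\f_q$-dimensions on the two sides of the decomposition (noting $\dim_{\f_q}\m(n_4,\f_{q^2}) = 2n_4^2$) then gives $168 = 1 + 36 + 49 + n_3^2 + 2n_4^2$, i.e. $n_3^2 + 2n_4^2 = 82$, and checking $1 \le n_4 \le 6$ shows the only solution in positive integers is $(n_3,n_4) = (8,3)$. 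This yields the second displayed decomposition.

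I do not anticipate a genuine obstacle, since the possible shapes of the decomposition, the cyclotomic-class count, and the existence of the degree $6$ and $7$ blocks are all already in hand. The only point that might seem to need care is checking that it is the two degree $3$ representations — and not, say, the degree $8$ one — that fuse into the single $\f_{q^2}$-block in type 2; but this is forced, because $n_3^2 + 2n_4^2 = 82$ has a unique positive solution, so no character-field or Galois-descent argument is actually required. One could also argue more conceptually: since $\f_qG \otimes_{\f_q}\overline{\f_q} = \overline{\f_q}G$ is type-independent, Proposition \ref{s007} shows the blocks over $\overline{\f_q}$ have degrees $1,3,3,6,7,8$; an $\f_{q^2}$-block $\m(d,\f_{q^2})$ splits over $\overline{\f_q}$ into two copies of $\m(d,\overline{\f_q})$, and $3$ is the only repeated degree, so $n_4 = 3$ and the remaining $\f_q$-blocks carry degrees $6,7,8$. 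Equivalently, the two $3$-dimensional irreducibles of $\SL(3,2)$ have character field $\Q(\sqrt{-7})$ and are interchanged by $\textup{Gal}(\f_{q^2}/\f_q)$ precisely in type 2. Either route gives the same conclusion.
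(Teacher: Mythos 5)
Your proposal is correct, and for type $1$ it coincides with the paper's one-line proof (``follows from Proposition \ref{n004} and Proposition \ref{s007}''). For type $2$, however, you are actually more careful than the paper: Proposition \ref{s007} is proved via the count $1+36+49+n_4^2+n_5^2+n_6^2=168$, which presupposes that all six Wedderburn components are matrix algebras over $\f_q$ itself, i.e.\ it literally only treats the type $1$ shape; the paper then cites it ``immediately'' for type $2$ as well, where the correct count is $1+36+49+n_3^2+2n_4^2=168$ because $\dim_{\f_q}\m(n_4,\f_{q^2})=2n_4^2$. You redo exactly this count, observe that the $\m(6,\f_q)$ and $\m(7,\f_q)$ blocks from Corollary \ref{s010} cannot be the $\f_{q^2}$-block (their centers are $\f_q$), and verify that $n_3^2+2n_4^2=82$ has the unique positive solution $(8,3)$ — which is the step the paper leaves implicit. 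Your alternative argument via base change to $\overline{\f_q}$ (an $\f_{q^2}$-block splits into two blocks of equal degree, and $3$ is the only repeated degree in $\{1,3,3,6,7,8\}$) is a clean conceptual cross-check not present in the paper. In short: same toolkit, but your write-up closes a small gap in the paper's deduction of the type $2$ case.
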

\begin{proof}
Follows immediately from Proposition \ref{n004} and Proposition \ref{s007}. 
\end{proof}
\medskip
\begin{theorem}\label{s009}
Let $\f_q$ be a field of characteristic $p$ and $p$ $\geq$ $11$. Let $G$ be the group $\SL(3,2)$. Then the unit group $\mathcal{U}(\f_qG)$ is as listed in the following table:
\begin{center}
\begin{tabular}{|c|c|c|}
\hline
$\text{$ p\mod 7$}$&$\text{$k$}$&$\mathcal{U}(\f_q\SL(3,2))$\\
\hline $\pm 1,\pm 2 ,\pm 3$ & $6l$  & $\f_q^\times\oplus \GL(6,\f_q)\oplus\GL(7,\f_q)\oplus\GL(8,\f_q)\oplus
    \GL(3,\f_q)^2$\\
\hline $1,2,-3$&$6l+1$ &$\f_q^\times\oplus \GL(6,\f_q)\oplus\GL(7,\f_q)\oplus\GL(8,\f_q)\oplus
    \GL(3,\f_q)^2$\\
\hline$-1,-2,3 $&$6l+1$ &$\f_q^\times\oplus \GL(6,\f_q)\oplus\GL(7,\f_q)\oplus\GL(8,\f_q)\oplus
    \GL(3,\f_{q^2})$\\
\hline$\pm 1,\pm 2 ,\pm 3$  &$6l+2$&$\f_q^\times\oplus \GL(6,\f_q)\oplus\GL(7,\f_q)\oplus\GL(8,\f_q)\oplus
    \GL(3,\f_q)^2$\\
\hline $1,2,-3$&$6l+3$&$\f_q^\times\oplus \GL(6,\f_q)\oplus\GL(7,\f_q)\oplus\GL(8,\f_q)\oplus
    \GL(3,\f_q)^2$\\
\hline $-1,-2,3$&$6l+3$ &$\f_q^\times\oplus \GL(6,\f_q)\oplus\GL(7,\f_q)\oplus\GL(8,\f_q)\oplus
    \GL(3,\f_{q^2})$\\
\hline$\pm 1,\pm 2 ,\pm 3$  &$6l+4$ &$\f_q^\times\oplus \GL(6,\f_q)\oplus\GL(7,\f_q)\oplus\GL(8,\f_q)\oplus
    \GL(3,\f_q)^2$\\
\hline $1,2,-3$&$6l+5$ &$\f_q^\times\oplus \GL(6,\f_q)\oplus\GL(7,\f_q)\oplus\GL(8,\f_q)\oplus
    \GL(3,\f_q)^2$\\
\hline $-1,-2,3$&$6l+5$ &$\f_q^\times\oplus \GL(6,\f_q)\oplus\GL(7,\f_q)\oplus\GL(8,\f_q)\oplus
    \GL(3,\f_{q^2})$\\
\hline
\end{tabular}.
\end{center}
\end{theorem}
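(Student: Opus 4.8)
The plan is to deduce the theorem directly from Proposition \ref{s008} together with the classification of $(p,k)$ into types $1$ and $2$ recorded in the table inside the proof of Proposition \ref{n004}. The only ring-theoretic input needed is the elementary fact that passing to unit groups commutes with finite direct sums of rings, i.e. $\mathcal{U}(R_1 \oplus \cdots \oplus R_m) \cong \mathcal{U}(R_1) \times \cdots \times \mathcal{U}(R_m)$, together with $\mathcal{U}(\m(n,K)) = \GL(n,K)$ for any field $K$ and $\mathcal{U}(\f_q)=\f_q^\times$.

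First I would apply these facts to the first displayed decomposition of Proposition \ref{s008}: when $(p,k)$ is of type $1$ we have
\[
\f_q G \cong \f_q \oplus \m(6,\f_q)\oplus\m(7,\f_q)\oplus\m(8,\f_q)\oplus \m(3,\f_q)^2,
\]
so taking units componentwise yields
\[
\mathcal{U}(\f_q G) \cong \f_q^\times \oplus \GL(6,\f_q)\oplus\GL(7,\f_q)\oplus\GL(8,\f_q)\oplus \GL(3,\f_q)^2 .
\]
Likewise, when $(p,k)$ is of type $2$ the decomposition $\f_q \oplus \m(6,\f_q)\oplus\m(7,\f_q)\oplus\m(8,\f_q)\oplus\m(3,\f_{q^2})$ gives
\[
\mathcal{U}(\f_q G) \cong \f_q^\times \oplus \GL(6,\f_q)\oplus\GL(7,\f_q)\oplus\GL(8,\f_q)\oplus \GL(3,\f_{q^2}) .
\]

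Finally I would match these two possibilities against the rows of the theorem's table by reading off the type of $(p,k)$ from the table established in the proof of Proposition \ref{n004}: every prime $p\geq 11$ satisfies $p\equiv \pm1,\pm2,\pm3 \pmod 7$, every $k\in\mathbb{N}$ lies in one of the residue classes $6l,6l+1,\dots,6l+5$, and in each of the nine resulting combinations $(p,k)$ is of type $1$ except when $p\equiv -1,-2,3\pmod 7$ and $k\equiv 1,3,5\pmod 6$, in which case it is of type $2$. Substituting the appropriate unit-group formula into each row then produces exactly the stated table. There is no real obstacle left at this stage: all the substantive content — the matrix sizes $(1,6,7,8,3,3)$ and the cyclotomic $\f_q$-class count that fixes the field of the last component — has already been carried out in Propositions \ref{n004}, \ref{s007} and \ref{s008}, so the present argument is pure bookkeeping, the only mild point being to confirm that the nine-row table of Proposition \ref{n004} exhausts all congruence classes in $(p \bmod 7,\, k \bmod 6)$.
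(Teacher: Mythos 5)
Your proposal is correct and follows exactly the paper's own route: take unit groups componentwise in the Wedderburn decomposition of Proposition \ref{s008}, using $\mathcal{U}(R_1\times R_2)\cong R_1^\times\times R_2^\times$ and $\mathcal{U}(\m(n,K))=\GL(n,K)$, then read off the type of $(p,k)$ from the table in Proposition \ref{n004}. The only difference is that you spell out the bookkeeping that the paper compresses into one sentence.
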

\begin{proof}
This follows immediately from Proposition \ref{s008} and the fact that given two rings $R_1,R_2$, we have $(R_1\times R_2)^\times=R_1^\times\times R_2^\times$.
\end{proof}
\begin{remark}
Theorem \ref{s009} holds for $p=5$ as well.
\end{remark}
\section{Concluding remarks}
We have used some techniques of character theory to reduce the number of possibilities for $n_i$'s. The book \cite{DL} deals a good portion of ordinary representation theory over finite field. From exercise at the end of \textsection $4$, we have
\begin{remark}
Let $G$ be a finite group and $k$ is a field such that $\textup{char} k\nmid|G|$. Assume $\{V_i:1\leq i\leq r\}$  to be full set of representatives of non-isomorphic irreducible $kG$-modules. Then $k$ is a splitting field of $G$ if and only if
\begin{align*}
    |G|=\displaystyle\sum\limits_{i=1}^r\dim_k(V_i)^2.
\end{align*}.
\end{remark}
Using this we conclude that
\begin{remark}
For $G=\GL(3,2)$, the field $\f_{q}$, where $q=p^k$, where either $p=5$ or $p\geq 11$ is a splitting field of $G$ if and only if $(p,k)$ is of type $1$.
\end{remark}

\end{document}